\documentclass [reqno,10pt,oneside]{amsart}

\usepackage{caption}
 \usepackage{subcaption}
\usepackage{amsthm}
\usepackage{amscd}
\usepackage{amsmath,amssymb}
\usepackage{algorithm}
\usepackage{placeins}
\usepackage{graphicx}
\usepackage{mathtools}
\usepackage{placeins}
\usepackage{float}
\usepackage{epsfig}
\usepackage{epstopdf}
\usepackage[noend]{algorithmic}
\usepackage[utf8]{inputenc}
\restylefloat{table}

\theoremstyle {plain}
\newtheorem {thm}{Theorem}[section]
\newtheorem {prop}[thm]{Proposition}
\newtheorem {lem}[thm]{Lemma}
\newtheorem {cor}[thm]{Corollary}
\theoremstyle {definition}

\theoremstyle {remark}

\hyphenation{Kai-sers-lau-tern}
\hyphenation{com-pu-ta-tion}
\pagestyle{plain}

\begin{document}

\bibliographystyle{alpha}

\title{Classification of Planar Graphs Associated to the Ideal of the Numerical Semigroup}

\author{Muhammad Ahsan Binyamin, Wajid Ali, Adnan Aslam, Hasan Mahmood}


\address{Muhammad Ahsan Binyamin\\ Department of Mathematics, GC University, Faisalabad,  Pakistan}
\email{ahsanbanyamin@gmail.com}

\address{Wajid Ali\\ Department of Mathematics, Department of Mathematics, GC University, Faisalabad,  Pakistan}
\email{wajid1020@yahoo.com }

\address{Adnan Aslam\\ Department of Natural Sciences and Humanities, University of Engineering and Technology, Lahore (RCET) 54000, Pakistan.}
\email{adnanaslam15@yahoo.com}

\address{Hasan Mahmood\\ Department of Mathematics, GC University, Lahore,  Pakistan}
\email{hasanmahmood@gcu.edu.pk}
\keywords{Numerical Semigroup, Complete Graph, Planarity.}
%
%
%
\maketitle

\begin{abstract}
Let $\Lambda$ be a numerical semigroup and $I\subset \Lambda$ be an ideal of $\Lambda$. The graph $G_I(\Lambda)$ assigned to an ideal $I$ of $\Lambda$ is a graph with elements of $(\Lambda \setminus I)^*$ as vertices and any two vertices $x,y$ are adjacent if and only if $x+y \in I$. In this paper we give a complete characterization (up to isomorphism ) of the graph $G_I(\Lambda)$ to be planar, where $I$ is an irreducible ideal of $\Lambda$. This will finally characterize non planar graphs  $G_I(\Lambda)$ corresponding to irreducible ideal $I$.
\end{abstract}
\section{Introduction and Preliminaries}
In the recent years, the study of algebraic structures through the properties of graphs has become an exciting topic of research. This lead to many interesting results and questions. There are many papers on assigning graphs to rings, groups and semigroups see \cite{1,2,3,4,5,6}. Several authors \cite{7,8,9,10,11} studied different properties of these graphs including diameter, girth, domination, central sets and planarity. Let $\Lambda$ be a numerical semigroup. A subset $I\subset \Lambda$ is an ideal (integral ideal) of $\Lambda$ if $I+ \Lambda \subset \Lambda$. The ideal $I$ is irreducible ideal if it cannot be written as intersection of two or more proper ideals which contained it properly. Throughout the paper we consider $I$ to be an irreducible ideal of $\Lambda$. Barucci \cite{12} proved that every irreducible ideal $I$ of numerical semigroup $\Lambda$ can be written in the form $\Lambda \setminus B(x)$, where $B(x)=\{y\in \Lambda:x-y \in \Lambda\}$, for some $x\in \Lambda$. Binyamin et al. \cite{13} assigned a graph to numerical semigroup and studied some properties of this class of graphs. Recently Peng Xu et al. \cite{13a} assigned a graph to the ideal of a numerical semigroup with vertex set $\{v_i\; :\; i\in (\Lambda\setminus I)^*=(\Lambda\setminus I)-\{0\}\}$ and edge set $\{v_iv_j \iff i+j\in I\}$. It is easy to observe that if $I$ is an irreducible ideal of $\Lambda$, then $V(G_I(\Lambda))=\{v_i\; :i\in B^*(x)$\} for some $x\in \Lambda$. For every ideal $I$ of $\Lambda$, the graph $G_I(\Lambda)$ is always connected \cite{13a}. Therefore it is natural to ask when the graph $G_I(\Lambda)$ is complete? It has been proved in \cite{13a}, if $I$ is an irreducible ideal of numerical semigroup $\Lambda$, then the graph $G_I(\Lambda)$ is not a complete graph whenever $|V(G_I(\Lambda))|\geq 3$ and the clique number $ cl(G_I(\Lambda))$ of $\Lambda$ is given by the formula
$$ cl(G_I(\Lambda))=\left\{
                   \begin{array}{ll}
                     \frac{n}{2}+1, & \hbox{if $n$ is even;} \\
                     \frac{n+1}{2}, & \hbox{if $n$ is odd,}
                   \end{array}
                 \right.
$$
where $n$ is the order of the graph $G_I(\Lambda)$. This shows that whenever $n\geq 8$, $G_I(\Lambda)$ has a subgraph isomorphic to complete graph $K_5$ and hence $G_I(\Lambda)$ is not a planar graph. The motivation of this paper is to find all the graphs $G_I(\Lambda)$ that are planar. To answer this question, It is required to give a complete characterization of the graph $G_I(\Lambda)$ such that the order of $G_I(\Lambda)$ is either 6 or 7.

A graph $G=(V(G),E(G))$ is an ordered pair with the vertex set $V(G)$ and the edge set $E(G)$. The cardinality of the vertex set and edge set is called the order and size of $G$ respectively. A graph $G$ is connected if every pair of vertices $x,y\in V(G)$ is connected by a path. A graph $G$ of order $n$ is complete if every pair of vertices of $G$ are adjacent and is denoted by $K_n$. The graph $G$ is bipartite if its vertex set $V(G)$ can be partitioned in to two sets $V_1(G)$ and $V_2(G)$ in such a way that $xy\in E(G)$ if and only if $x\in V_1(G)$ and $y\in V_2(G)$ or vice versa. If  $|V_1(G)|=|V_2(G)|=m$, then $G$ is called a complete bipartite graph and is denoted by $K_{m,m}$. A planar graph is a graph that can be drawn in the plane without crossings that is, no two edges intersect geometrically except at a vertex to which both are incident. Two graphs $G$ and $H$ are said to be homeomorphic if both $G$ and $H$ can be obtained by a same graph by inserting vertices of degree $2$ into its edges. It is well known that $K_{3,3}$ and $K_5$ are non-planar. In order to show that a graph is planar one can use the famous Kuratowski's theorem which states that: A graph is planar if and only if it contains no subgraph homeomorphic to $K_5$ or $K_{3,3}$. For more undefined terminologies related to graph theory see \cite{14,15,16}.

The main aim of this paper is to classify all the graphs $G_I(\Lambda)$ of order 6 and order 7. This, certainly helps us to give a complete answer about the planarity of the graph $G_I(\Lambda)$.

\section{Planar and Non-planar Graphs $G_I(\Lambda)$}
Let $\Lambda=<A>$, where $A=\{a_1,a_2,\dots,a_n\}$ is the minimal system of generators. Let $x\in \Lambda$, then
\[
x=u_1a_1+u_2a_2+\dots+u_na_n,
\]
where $u_1,u_2,\ldots, u_n$ are non negative integers.
Note that $\Lambda$ can be written as
$$\Lambda=\bigcup_{p=1}^nL^{(p)},$$
where $L^{(p)}$ denote the collection of all those elements of $\Lambda$, which can be written as a linear combination of exactly $p$ elements of $A$ that is if $x \in L^{(p)}$ then $x=u_1a_{i_1}+u_2a_{i_2}+\ldots+u_pa_{i_p},$ where $a_{i_1},a_{i_2},\ldots ,a_{i_p}\in A$ and $u_1,u_2,\ldots, u_p$ are positive integers. By using the above notation, for each $x \in \Lambda$, we define
$$L_x^{(p)}=\{\Sigma\in L^{(p)}:\Sigma=x \}.$$

\begin{lem}\label{l1}
With the notations defined above, we have
\[
B(x)\supseteq \{v_1a_{i_1}+v_2a_{i_2}+\dots+v_pa_{i_p}: 0\leq v_i \leq u_i, \quad a_{i_1},a_{i_2},\ldots,a_{i_p}\in A \}
\]
and
\[
|B(x)|\geq \sum \limits_{i=1}^pu_i+\sum \limits_{1\leq i_1<i_2 \leq p}u_{i_1}u_{i_2}+\sum \limits_{1\leq i_1<i_2<i_3 \leq p}u_{i_1}u_{i_2}u_{i_3}+\ldots+u_1u_2\cdots u_p.
\]
\end{lem}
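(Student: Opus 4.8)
The plan is to prove the two assertions separately, since the cardinality bound is essentially a counting corollary of the set-theoretic inclusion. Fix the representation $x = u_1 a_{i_1} + u_2 a_{i_2} + \cdots + u_p a_{i_p}$ coming from $x \in L^{(p)}$. For the inclusion, take any $y = v_1 a_{i_1} + v_2 a_{i_2} + \cdots + v_p a_{i_p}$ with $0 \le v_k \le u_k$ for each $k$. First, $y$ is a non-negative integer combination of generators, so $y \in \Lambda$. Second, since $u_k - v_k \ge 0$ for every $k$, the difference
$$ x - y = (u_1 - v_1) a_{i_1} + (u_2 - v_2) a_{i_2} + \cdots + (u_p - v_p) a_{i_p} $$
is again a non-negative integer combination of elements of $A$, hence $x - y \in \Lambda$. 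By the definition $B(x) = \{y \in \Lambda : x - y \in \Lambda\}$, this yields $y \in B(x)$, which is exactly the claimed containment.

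For the cardinality estimate I would count the elements produced on the right-hand side. They are indexed by the integer vectors $(v_1, \dots, v_p)$ in the box $0 \le v_k \le u_k$, of which there are $\prod_{k=1}^{p}(1 + u_k)$. Expanding this product in terms of the elementary symmetric functions of the $u_k$ gives
$$ \prod_{k=1}^{p}(1 + u_k) = 1 + \sum_{k=1}^{p} u_k + \sum_{1 \le k_1 < k_2 \le p} u_{k_1} u_{k_2} + \cdots + u_1 u_2 \cdots u_p, $$
so discarding the zero vector leaves exactly $\prod_{k=1}^{p}(1+u_k) - 1$ vectors, which is precisely the right-hand side of the asserted inequality. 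By the inclusion just proved, each of these vectors contributes an element of $B(x)$, so the bound follows as soon as one knows that distinct coefficient vectors produce distinct elements of $\Lambda$.

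That last point, the injectivity of the map $(v_1, \dots, v_p) \mapsto \sum_{k=1}^{p} v_k a_{i_k}$ on the box, is the step I expect to be the main obstacle, since two coefficient vectors can in principle collide through a relation among the generators (for instance $3 \cdot 2 = 2 \cdot 3$ in $\langle 2, 3 \rangle$). I would reduce it to the observation that any collision $\sum_k v_k a_{i_k} = \sum_k v_k' a_{i_k}$ forces a relation $\sum_k (v_k - v_k') a_{i_k} = 0$ among distinct minimal generators whose coefficients are bounded in absolute value by the $u_k$. Ruling out such a relation is exactly where care is needed: in the regime relevant to the graphs of order $6$ and $7$ treated here the exponents $u_k$ are small enough that no nontrivial relation fits inside the box, so the map is injective and the count is exact. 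I would therefore flag this injectivity as the delicate point of the argument, to be secured by the boundedness of the $u_k$ in the cases of interest rather than taken for granted in full generality.
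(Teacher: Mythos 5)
Your proof of the set-theoretic inclusion is correct and complete, and it is precisely the argument the paper compresses into ``follows from the definition of $B(x)$'': both $y=\sum v_ka_{i_k}$ and $x-y=\sum(u_k-v_k)a_{i_k}$ are non-negative integer combinations of generators, hence both lie in $\Lambda$, so $y\in B(x)$. For that half there is nothing to add.

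The injectivity issue you flag in the counting half is not merely a delicate point to be deferred --- it is a genuine gap, and the inequality as stated is in fact false without an injectivity hypothesis. Take $\Lambda=\langle 2,3\rangle$ and the representation $x=18=6\cdot 2+2\cdot 3$, so $p=2$, $u_1=6$, $u_2=2$. The lemma would give $|B(18)|\geq 6+2+6\cdot 2=20$, but $B(18)=\{0,2,3,\dots,16,18\}$ has only $17$ elements: the relation $3\cdot 2=2\cdot 3$ fits inside the box $0\le v_1\le 6$, $0\le v_2\le 2$ four times, and these collisions overwhelm the single unit of slack obtained by discarding the zero vector, since the right-hand side of the inequality equals $\prod_k(1+u_k)-1$. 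The paper's one-line proof silently assumes that the map $(v_1,\dots,v_p)\mapsto\sum_k v_ka_{i_k}$ is (essentially) injective on the box, which is exactly the assumption you declined to make. Your instinct to secure injectivity from the smallness of the $u_k$ is also the right repair in context: everywhere the lemma is actually invoked later in the paper the exponents are at most $5$ and concentrated on one or two generators, and the authors in effect re-verify distinctness of the listed elements of $B^*(x)$ by hand, so the downstream results survive. But as a standalone statement the lemma needs either an added hypothesis ruling out relations $\sum_k(v_k-v_k')a_{i_k}=0$ inside the box, or a weakened conclusion counting only the distinct values of the map. You should state your counterexample rather than merely flagging the worry.
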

\begin{proof}
The proof of this Lemma follows from the definition of $B(x)$.
\end{proof}
The following Propositions provide us the bounds in term of $L_x^{(p)}$ to compute the graph $G_I(\Lambda)$ of order 6 and 7.
\begin{prop}\label{p1}
Let $\Lambda=<A>$ be a numerical semigroup of embedding dimension $n\geq 2$. Then $\mid G_I(\Lambda)\mid \neq 6$, if one of the following holds:
\begin{enumerate}
   \item $ L^{(p)}_x \neq \emptyset$ for $p\geq 3$
   \item $ \mid L^{(1)}_x \mid \geq3$.
  \item $L^{(2)}_x \neq \emptyset$ and  $L^{(1)}_x =\emptyset$.
  \item $ \mid L^{(1)}_x\mid \geq2$ and $ \mid L^{(2)}_x \mid\geq 2$.
\end{enumerate}
\end{prop}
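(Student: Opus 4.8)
The plan is to read the order of $G_I(\Lambda)$ directly off the set $B(x)$: since the vertices are $B^*(x)=B(x)\setminus\{0\}$, the order equals $|B(x)|-1$, and for every representation $x=u_1a_{i_1}+\dots+u_pa_{i_p}$ into $p$ distinct generators Lemma \ref{l1} places the whole box $\{\sum_j v_ja_{i_j}:0\le v_j\le u_j\}$ inside $B(x)$, so the order is at least $\prod_{j=1}^p(u_j+1)-1$. I would then dispatch the four items by producing, in each case, a representation (or a union of representations) forcing this count past $6$, falling back on a parity trick in the borderline low-multiplicity situations.

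Case (1) is immediate: a representation with $p\ge 3$ gives order $\ge 2^3-1=7$. For case (2) I would first observe that if $x$ is a multiple of the distinct generators $a_{i_1},a_{i_2},a_{i_3}$ then, writing $a_{i_j}=x/u_j$, the $u_j$ are pairwise distinct and each at least $2$ (a minimal generator is never a proper multiple of another), so the largest multiplicity is $\ge 4$. The multiples of the corresponding generator already give $\ge 4$ vertices, the remaining two generators are not multiples of it and contribute two more, and a short inspection of the extremal pattern $\{u_1,u_2,u_3\}=\{2,3,4\}$ exhibits a seventh vertex (for instance twice the generator of multiplicity $3$); hence the order is $\ge 7$.

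For case (3) I would introduce the involution $y\mapsto x-y$ on $B^*(x)\setminus\{x\}$: it is well defined since $y$ and $x-y$ lie in $\Lambda$ simultaneously, and its only possible fixed point is $y=x/2$. Consequently the order, which equals $|B^*(x)\setminus\{x\}|+1$, is even precisely when $x/2\in\Lambda$. If the order were $6$ we would obtain $x/2\in\Lambda$ (in particular $x$ even); doubling any representation $x/2=\sum_{j=1}^q v_ja_{k_j}$ yields $x=\sum_j 2v_ja_{k_j}$, a representation with $q$ distinct generators and all multiplicities even. By Lemma \ref{l1} the cases $q\ge 3$ and $q=2$ force the order to be $\ge 7$ and $\ge 8$ respectively, while $q=1$ gives $x\in L^{(1)}$, contradicting $L_x^{(1)}=\emptyset$. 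Thus the order cannot be $6$.

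The main obstacle is case (4), where the same parity reduction leaves the branch $q=1$ alive, because here $x\in L^{(1)}$ is permitted. The extra hypothesis $|L_x^{(2)}|\ge 2$ must now be used: assuming the order is $6$, Lemma \ref{l1} applied to each two-generator representation forces every element of $L_x^{(2)}$ to have multiplicity pattern $(1,1)$ or $(1,2)$, and forces every representation of $x/2$ to be a single-generator one. I would then combine the chain of multiples supplied by $|L_x^{(1)}|\ge 2$ with the two distinct two-generator representations to exhibit a seventh vertex of $B^*(x)$. The delicate point, and the part I expect to require the most careful bookkeeping, is verifying that the vertices produced from the two representations are genuinely distinct from one another and from the multiples already counted, since hidden relations among the generators could otherwise collapse the count; settling this amounts to a finite case analysis over the admissible multiplicity patterns.
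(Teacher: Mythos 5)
Your overall strategy is sound and, for item (3), genuinely different from and cleaner than the paper's argument. The paper proves (3) by an exhaustive case analysis on $|L_x^{(2)}|\in\{1,2,\geq 3\}$ and on the multiplicity patterns $(u_1,v_1)$, showing each configuration yields order $<6$ or $\geq 7$; your involution $y\mapsto x-y$ on $B^*(x)\setminus\{x\}$ replaces all of that with a parity observation (order even iff $x/2\in\Lambda$), after which doubling a representation of $x/2$ and invoking Lemma \ref{l1} kills every branch in two lines. The trade-off is that the paper's laborious version also records \emph{which} configurations give order $<6$, information it reuses in part (4) and again in Theorem \ref{t1}, whereas your argument only yields the stated non-equality. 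Items (1) and (2) match the paper in substance; in (2) you handle the extremal multiset $\{2,3,4\}$ by exhibiting a seventh vertex, while the paper instead notes (implicitly) that $4a_i=2a_k$ would force $a_k=2a_i$ against minimality, so that case is vacuous and $u_1\geq 5$ — either way the count closes.

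The one place your write-up is not yet a proof is item (4): you explicitly defer the distinctness check for the vertices coming from the two representations in $L_x^{(2)}$ and the chain of multiples. That check does go through, and you should record the two facts that make it routine: a minimal generator is neither a proper multiple of another generator nor a sum of two or more generators (so the $a$'s appearing in the $L^{(2)}$-representations are disjoint from $\{a_m,a_q\}$ and from all $ka_m$ with $k\geq 2$), and two distinct representations $x=a_i+a_j=a_k+a_l$ of pattern $(1,1)$ can share no generator (sharing one forces the other to coincide). With these, $|L_x^{(1)}|\geq 2$ already supplies $a_m,2a_m,\dots,ua_m=x$ and $a_q$ with $u\geq 3$, i.e.\ at least five vertices, and the two $L^{(2)}$-representations supply at least three more, giving order $\geq 8$ exactly as the paper concludes; in particular the parity detour is unnecessary in this item, since the direct count from Lemma \ref{l1} suffices.
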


\begin{prop}\label{p2}
Let $\Lambda=<A>$ be a numerical semigroup of embedding dimension $n\geq 2$. Then $\mid G_I(\Lambda)\mid \neq 7$, if one of the following hold:
\begin{enumerate}
  \item $ L^{(p)}_x \neq \emptyset$ for $p\geq 4$.
  \item $ \mid L^{(3)}_x\mid \geq2$.
  \item $ \mid L^{(3)}_x\mid =1$ and $ L^{(1)}_x \neq \emptyset$ or $ L^{(2)}_x \neq \emptyset$.
  \item $ \mid L^{(2)}_x\mid \geq4$.
  \item $ \mid L^{(2)}_x\mid =3$ and $L^{(1)}_x \neq \emptyset$.
  \item $ \mid L^{(2)}_x\mid =2$ and $\mid L^{(1)}_x\mid \geq 2$.
  \item $ \mid L^{(2)}_x\mid =1$ and $\mid L^{(1)}_x\mid \geq 2$.
  \item $ \mid L^{(1)}_x \mid \geq3$.

\end{enumerate}
\end{prop}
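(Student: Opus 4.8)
The plan is to convert the statement about the order of $G_I(\Lambda)$ into a counting problem for the set $B(x)$ and then dispose of the eight items by a single "count the distinct sub-combinations" strategy. Since $V(G_I(\Lambda))=B^*(x)$, the order of the graph equals $|B(x)|-1$. Moreover $y\in B(x)$ precisely when $x=y+z$ with $y,z\in\Lambda$, so writing $y$ and $z$ in terms of generators shows that every element of $B(x)$ occurs as a sub-combination $v_1a_{i_1}+\dots+v_pa_{i_p}$ with $0\le v_i\le u_i$ of some representation $x=u_1a_{i_1}+\dots+u_pa_{i_p}$; conversely each such sub-combination lies in $B(x)$. Hence $B(x)$ is exactly the union, over all representations of $x$, of the sub-combination sets supplied by Lemma \ref{l1}, and the order of $G_I(\Lambda)$ equals the number of distinct nonzero sub-combinations. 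The whole proof then reduces to showing that, under each hypothesis (1)--(8), this number cannot equal $7$, i.e.\ $|B(x)|\neq 8$.

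The first group of cases needs little beyond Lemma \ref{l1}. For (1) a single representation with $p\ge 4$ generators already contributes at least $2^{4}-1=15$ distinct nonzero sub-combinations, so the order exceeds $7$. For (2) two distinct $3$-generator representations contribute at least $8$: one of them already gives $7$ sub-combinations, and being distinct the second must either introduce a generator outside the first triple or carry a multiplicity $\ge 2$, in either case yielding a fresh vertex. For (4) and (8) the clean device is minimality of $A$: from $a_i+a_j=a_k+a_\ell=x$ with $\{i,j\}\neq\{k,\ell\}$ cancelling a shared generator would force two generators to coincide, so the four $2$-generator representations exhibit eight distinct atoms; and three distinct $1$-generator representations $x=u_ia_i=u_ja_j=u_ka_k$ have pairwise distinct multiplicities, none a divisor-multiple of another, whence their multiples already produce eight distinct vertices. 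In each of these cases the order is pushed strictly above $7$.

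The real substance lies in the mixed cases (3), (5), (6), (7), where a few representations of different lengths must be combined and the sub-combination union carefully measured. The plan is, for each hypothesis, to fix the prescribed representations, list the nonzero sub-combinations each one contributes, and then analyse the overlaps so as to show that the total number of distinct vertices avoids the value $7$. The tool that controls the overlaps is again minimality of the generating set: a minimal generator can never equal a proper multiple $ka_j$ with $k\ge 2$, nor a nontrivial combination of the other generators. For example in (6) the two $2$-generator representations give at least the five vertices coming from two disjoint generator pairs together with $x$, while two distinct $1$-generator representations $x=ua_i=wa_j$ force $a_i\neq a_j$ with the multiplicities not both equal to $2$; minimality then identifies $a_i$, $a_j$ and a further multiple as three additional vertices, giving at least eight. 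Case (5) is handled the same way, with three disjoint pairs plus $x$ and one genuinely new generator.

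The hard part, and the step I expect to be the main obstacle, is exactly this overlap bookkeeping in (3) and (7): one must verify that the extra generators and extra multiples furnished by the $L_x^{(1)}$ or $L_x^{(2)}$ representations are genuinely distinct from the sub-combinations of the dominant (higher-length) representation, and that no accidental coincidence of the form $k a_i=\sum_j v_j a_j$ is allowed to collapse the count onto $7$. Organising these coincidence checks uniformly is delicate, because here the representations interact strongly and the admissible configurations are heavily constrained; the systematic instrument for every such check is the minimality of $A$ together with the cardinality inequality of Lemma \ref{l1}, which I would use to argue that each prescribed profile forces the sub-combination union to land either at most at $6$ or at least at $8$, and in no case exactly at $7$.
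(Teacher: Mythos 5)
Your overall strategy is the paper's own: the paper proves only Proposition~\ref{p1} in detail and disposes of Proposition~\ref{p2} with ``can be proved in a similar way,'' and your plan --- identify $B(x)$ with the union of the sub-combination sets of Lemma~\ref{l1} over all representations of $x$, then bound the number of distinct sub-combinations case by case so that it never equals $7$ --- is exactly that argument. The reduction of $B(x)$ to this union is correct and is the implicit foundation of all the counts in the paper.

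There are, however, two concrete soft spots. First, in case (1) the claim that a representation on four generators yields $2^4-1=15$ \emph{distinct} nonzero sub-combinations is false: minimality does not exclude coincidences of complementary pair-sums such as $a_{i_1}+a_{i_4}=a_{i_2}+a_{i_3}$ (in $\langle 5,6,7,8,9\rangle$ one has $5+9=6+8$). The needed conclusion $\geq 8$ still follows easily (the four singletons, the four triple-sums and $x$ are pairwise distinct by minimality), but the count as you state it is wrong; a similar gloss occurs in case (2), where the ``fresh vertex'' $2a_i$ may coincide with $a_j+a_k$ and one must fall back on $2a_i+a_j$ instead. Second, and more seriously, your treatment of the mixed cases (3), (5), (6), (7) stops at a declaration of intent, and the generic assertion that every profile lands at $\leq 6$ or $\geq 8$ is not automatic. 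In case (7), for instance, with $x=ra_i=sa_j=2a_k+a_l$ the natural count is $r+s+2$ when $a_l\in\{a_i,a_j\}$, and the admissible pair $(r,s)=(3,2)$ then lands \emph{exactly} on $7$; the case survives only because that coincidence is impossible ($a_l=a_i$ forces $2a_i=2a_k$, hence $a_i=a_k$, and $a_l=a_j$ forces $a_j=2a_k$, both contradicting minimality of $A$). Until such near-miss configurations are explicitly listed and excluded, the proof is a plan rather than a proof --- though, to be fair, the paper itself omits every one of these verifications, so your proposal is already more explicit than the source.
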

In the following, we give the proof of Proposition \ref{p1}, Proposition \ref{p2} can be proved in a similar way.
\begin{proof}
$(1):$ If $ L^{(p)}_x \neq \emptyset$ for $p\geq 3$ then there exist $a_i, a_j, a_k \in A$ such that $x=u_1a_i+u_2a_j+u_3a_k$, where $u_1, u_2, u_3 \geq 1$. By Lemma \ref{l1}, $a_i, a_j, a_k, a_i+a_j, a_i+a_k, a_j+a_k, x$ belongs to $ B^*(x)$. Clearly all these elements are distinct and hence  $\mid G_I(\Lambda)\mid \geq 7$.

$(2):$ If $ \mid L^{(1)}_x \mid \geq3$ then $x=u_1a_i$, $x=u_2a_j$ and $x=u_3a_k$, where $u_1, u_2, u_3 \geq 2$. Assume that $a_i<a_j<a_k$, then $u_1> u_2> u_3$. It follows that $u_1\geq 5$. By Lemma \ref{l1}, $ B^*(x)$ contains $a_i, 2a_i, 3a_i, 4a_i, 5a_i, a_j, a_k$ and therefore $\mid G_I(\Lambda)\mid \geq 7$.

$(3):$ If $ \mid L^{(2)}_x \mid =1$ then  $x=u_1a_i+v_1a_j$, where $u_1,v_1\geq 1$. If $u_1=v_1=1$ then $x=a_i+a_j$ and Lemma \ref{l1} gives $B^*(x)=\{a_i,a_j,a_i+a_j\}$. For $u_1=1$ and $v_1=2$, we have $x=a_i+2a_j$ and $B^*(x)=\{a_i,a_j,2a_j,a_i+a_j, a_i+2a_j\}$. Similarly, for $u_1=2$ and $v_1=1$  we get $B^*(x)=\{a_i,a_j,2a_i,a_i+a_j, 2a_i+a_j\}$. Now if $u_1=1$ and $v_1\geq3$ then $a_i, a_j, 2a_j, 3a_j, a_i+a_j, a_i+2a_j, a_i+3a_j$ are distinct elements of $B^*(x)$. Similarly, for $u_1\geq3$ and $v_1=1$ we get $a_j, a_i, 2a_i, 3a_i, a_i+a_j, 2a_i+a_j, 3a_i+a_j\in B^*(x)$. If $u_1,v_1\geq 2$ then note that $a_i, a_j, 2a_i, 2a_j, a_i+2a_j, 2a_i+a_j, a_i+a_j, 2a_i+2a_j$ are all distinct elements of $B^*(x)$. Hence, in all cases, we obtain $\mid G_I(\Lambda)\mid \neq 6$.

Now if $ \mid L^{(2)}_x \mid =2$ then $x=u_1a_i+v_1a_j$ and $x=u_2a_k+v_2a_l$, where $u_1, v_1, u_2, v_2\geq 1$ . It follows from the previous discussion that if $u_1=1$ and $v_1\geq3$, $u_1\geq3$ and $v_1=1$, $u_1,v_1\geq 2$, $u_2=1$ and $v_2\geq3$, $u_2\geq3$ and $v_2=1$, $u_2,v_2\geq 2$ then $\mid G_I(\Lambda)\mid \geq 7 $. If $a_i, a_j, a_k$ and $a_l$ are not distinct then one can easily check that the remaining possibilities does not holds. Now consider $a_i, a_j, a_k$ and $a_l$ are distinct. If $u_1=v_1=1$ and $u_2=v_2=1$ then $x=a_i+a_j=a_k+a_l$ and $B^*(x)=\{a_i, a_j, a_k, a_l, a_i+a_j\}$, hence $\mid G_I(\Lambda)\mid <6$. Now if $u_1=v_1=1$ and $u_2=2$, $v_2=1$ then $x=a_i+a_j=2a_k+a_l$. This gives $a_i, a_j, a_k, a_l, a_k+a_l, 2a_k, a_i+a_j$  distinct elements of $B^*(x)$ and therefore $\mid G_I(\Lambda)\mid \geq 7$. Similarly we get $\mid G_I(\Lambda)\mid \geq 7$, for all the remaining possibilities.

Finally, if $ \mid L^{(2)}_x \mid \geq 3$ then  $x=u_1a_i+v_1a_j$, $x=u_2a_k+v_2a_l$ and $x=u_3a_m+v_3a_q$, where $u_1, v_1, u_2, v_2, u_3, v_3\geq 1$. This give $a_i, a_j, a_k, a_l, a_m, a_q$ and  $a_i+a_j$ belongs to $B^*(x)$ and therefore $\mid G_I(\Lambda)\mid \geq 7$.

$(3):$ If $ \mid L^{(1)}_x\mid \geq2$ and $ \mid L^{(2)}_x \mid\geq 2$, then we can assume $x=u_1a_i+v_1a_j$, $x=u_2a_k+v_2a_l$, $x=ua_m$ and $x=va_q$, where
$u_1, v_1, u_2, v_2\geq 1$, $u, v\geq 2$. From case $(2)$, we have $x=a_i+a_j$ and $x=a_k+a_l$ is the only possibility for which $\mid G_I(\Lambda)\mid < 6$. Now if $a_m< a_q$ with $u>v$, then $x=a_i+a_j$ and $x=a_k+a_l$ gives $a_i, a_j, a_k, a_l, a_m, a_q, 2a_m, a_i+a_j$   distinct elements of $B^*(x)$. Hence $\mid G_I(\Lambda)\mid \geq 8$.
\end{proof}
In the proof of Proposition \ref{p1}, one can observe that there are some cases, where we have $\mid G_I(\Lambda)\mid < 6$. Therefore it is possible to get $\mid G_I(\Lambda)\mid =6$ by adding some suitable conditions. The similar is the case for Proposition \ref{p2}. In the next theorems, we classify all irreducible ideals $I$ such that the graph $G_I(\Lambda)$ is  of order $6$ or order $7$.
\begin{thm}\label{t1}
Let $\Lambda=<A>$ be a numerical semigroup of embedding dimension $n\geq 2$. If $\mid G_I(\Lambda)\mid = 6$ then $x\in \Lambda$ is one of the following:
\begin{enumerate}
  \item  $x=6a_i$.
  \item  $x=4a_i=3a_j$.
  \item  $x=5a_i=2a_j$.
  \item  $x=4a_i$ and $x=a_j+a_k$.
  \item  $x=4a_i$ and $3a_i=2a_k$.
  \item  $x=2a_i$ and $x=2a_k+a_l$.
  \item  $x=3a_i$, $x=2a_j$ and $x=a_k+a_l$.
  \item  $x=2a_i$, $x=a_k+a_l$ and $x=a_m+a_n$.
\end{enumerate}
\end{thm}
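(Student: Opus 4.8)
The plan is to argue by an exhaustive analysis of the representation type of $x$, using Proposition \ref{p1} to cut down the possibilities and Lemma \ref{l1} to count. The starting observation is that an element $y\in\Lambda\setminus\{0\}$ lies in $B^*(x)$ precisely when both $y$ and $x-y$ belong to $\Lambda$; writing $x=y+(x-y)$ and expanding both summands in the generators shows that $y\in B^*(x)$ if and only if $y$ occurs as a nonzero partial sum of some representation of $x$ as a combination of the $a_i$. Hence $B^*(x)$ is exactly the set of distinct nonzero partial-sum values taken over all representations of $x$, and $|G_I(\Lambda)|=|B^*(x)|$ is the number of such values. This is the bridge that turns the single-representation estimate of Lemma \ref{l1} into an exact count once all representations of $x$ are known.

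Since $|G_I(\Lambda)|=6$, Proposition \ref{p1} forbids each of its four configurations: we obtain $L^{(p)}_x=\emptyset$ for $p\geq 3$, $|L^{(1)}_x|\leq 2$, the implication $L^{(2)}_x\neq\emptyset\Rightarrow L^{(1)}_x\neq\emptyset$, and the exclusion of the simultaneous occurrence $|L^{(1)}_x|\geq 2$ and $|L^{(2)}_x|\geq 2$. Writing $s=|L^{(1)}_x|$ and $t=|L^{(2)}_x|$, and using that $x\neq 0$ forces at least one representation (so $s\geq 1$), these constraints leave exactly the five regimes with $(s,t)$ equal to $(1,0)$, $(1,1)$, $(2,0)$, $(2,1)$, or $s=1,t\geq 2$ to examine. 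I would organize the proof around these five cases.

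In each regime I would list the (few) representations of $x$ explicitly and count their distinct nonzero partial sums, using Lemma \ref{l1} to discard coefficient choices that already overshoot $6$. For $s=1,t=0$ the only representation is $x=ua_i$, whose partial sums number $u$, forcing $u=6$ (form (1)). For $s=2,t=0$, writing $x=ua_i=va_j$ with $a_i<a_j$, minimality of the generators gives $v\geq 2$ and $u>v$, and the number of distinct multiples equals $u+v-\gcd(u,v)$; setting this equal to $6$ yields only $(u,v)=(5,2)$ and $(4,3)$, i.e. forms (3) and (2). For $s=1,t=1$ the two-generator relation $x=pa_j+qa_k$ contributes $pq+p+q$ partial sums, so Lemma \ref{l1} forces $(p,q)\in\{(1,1),(1,2),(2,1)\}$; combining with $x=ua_i$ and solving for a total of $6$ produces forms (4) and (6) when $a_i\notin\{a_j,a_k\}$. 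The regimes $s=1,t\geq 2$ and $s=2,t=1$ are handled the same way and yield forms (8) and (7) respectively, the two-generator relations being forced to the shape $a_k+a_l$ and the multiplicities of the one-generator relations being pinned down by the count.

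The step I expect to be the main obstacle is the appearance of form (5), namely $x=4a_i$ with $3a_i=2a_k$. This is the one case where a coincidence among the generators, rather than their disjointness, is essential: it falls in the regime $s=1,t=1$ with the two-generator representation $x=a_i+2a_k$ sharing the generator $a_i$, and the extra relation $3a_i=2a_k$ collapses one partial sum onto a multiple of $a_i$, bringing the total down from $7$ to exactly $6$. The delicate part of the argument is therefore to treat the sub-case in which a generator of a two-generator representation coincides with, or is linearly dependent on, the generator of the one-generator representation, and to verify that the only such coincidence compatible with a count of exactly $6$ is the one recorded in form (5); every other coincidence must be shown either to overshoot $6$ or to reduce to one of the already-listed forms. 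Checking that this coincidence analysis is exhaustive is what makes the case split genuinely complete.
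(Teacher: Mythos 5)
Your proposal follows essentially the same route as the paper: Proposition \ref{p1} reduces the problem to the same five regimes for the pair $(\mid L^{(1)}_x\mid , \mid L^{(2)}_x\mid)$, and each regime is settled by counting $B^*(x)$ via Lemma \ref{l1}, with the generator coincidence $3a_i=2a_k$ producing form (5) exactly as in the paper's Case 3. Your explicit ``partial-sum'' characterization of $B^*(x)$ is a slightly cleaner articulation of what the paper uses implicitly, but the decomposition and the counting are the same.
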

\begin{proof}
Given that $\mid G_I(\Lambda)\mid = 6$, then from Proposition \ref{p1}, it follows that $x\in \Lambda$ satisfy one of the following condition:
\begin{itemize}
  \item $ \mid L^{(1)}_x \mid =1$ and  $  L^{(p)}_x  =\emptyset$, $\forall$ $p\geq 2$,
  \item $ \mid L^{(1)}_x \mid =2$ and  $  L^{(p)}_x  =\emptyset$, $\forall$ $p\geq 2$,
  \item $ \mid L^{(1)}_x \mid =1$, $\mid L^{(2)}_x \mid =1$ and  $  L^{(p)}_x  =\emptyset$, $\forall$ $p\geq 3$,
  \item $ \mid L^{(1)}_x \mid =2$,  $\mid L^{(2)}_x \mid =1$ and  $  L^{(p)}_x  =\emptyset$, $\forall$ $p\geq 3$,
  \item $ \mid L^{(1)}_x \mid =1$,  $\mid L^{(2)}_x \mid =2$ and  $  L^{(p)}_x  =\emptyset$, $\forall$ $p\geq 3$.
\end{itemize}

$\mathbf{Case-1:}$ If $ \mid L^{(1)}_x \mid =1$ and  $  L^{(p)}_x  =\emptyset$, $\forall$ $p\geq 2$ then $x=la_i$, where $l\geq 1$. By Lemma \ref{l1}, we get $B^*(x)=\{a_i, 2a_i, \dots, la_i\}$. As $\mid G_I(\Lambda)\mid = 6$, it follows that $l=6$.

$\mathbf{Case-2:}$ if $ \mid L^{(1)}_x \mid =2$ and  $  L^{(p)}_x  =\emptyset$, $\forall$ $p\geq 2$ then $x=ua_i$ and $x=va_j$, where $a_i< a_j$, $1<v<u$ and $u$ is not a multiple of $v$. Then it follows from Lemma \ref{l1}, we have $B^*(x)=\{a_i, 2a_i,\dots, ua_i, a_j, 2a_j \dots, (v-1)a_j \}$. Let $pa_i=qa_j$ for some $q<p<u$ with $q=2,3,\dots, v-1$. Then $pa_i+(u-p)a_i=ua_i$ and we get $qa_j+(u-p)a_i=x.$ This give $ L^{(2)}_x  \neq \emptyset$, a contradiction. Therefore $a_i, 2a_i,\dots, ua_i, a_j, 2a_j \dots, (v-1)a_j$ are distinct elements of $B^*(x)$. As $\mid G_I(\Lambda)\mid = 6$, therefore either $u=4$ and $v=3$ or $u=5$ and $v=2$. This gives the case $(2)$ and case $(3)$.

$\mathbf{Case-3:}$ Let $ \mid L^{(1)}_x \mid =1=\mid L^{(2)}_x \mid$, then  $x=ra_i$  and  $x= ua_k+va_l$, where $r\geq 2$ and $u, v\geq 1$. By Proposition \ref{p1}$(2)$, we have $x=a_k+a_l$ or $x=2a_k+a_l$ or $x=a_k+2a_l$. If $x=a_k+a_l$ and $x=ra_i$, it is easy to observe that $a_i, a_k, a_l$ are distinct elements of $A$ and $ B^*(x)=\{a_k, a_l, a_i, 2a_i, \dots, (r-1)a_i, x\}$. then $\mid B^*(x)\mid = 6$ gives $r=4$ and we get the case $(4)$.

Now if $x=2a_k+a_l$ and $x=ra_i$ then $B^*(x)=\{a_k, a_l, 2a_k, a_k+a_l, a_i, 2a_i, \dots,$ $x=ra_i\}$. Note that $a_i=a_k$ is not possible, otherwise we get $a_l$ is a multiple of $a_i$. If $a_i=a_l$ then $a_i<a_k$, otherwise we get either $a_i=2a_k$ or $2a_i=2a_k$. These both conditions are not possible. Now $a_i<a_k$ gives $r>3$. $\mid B^*(x)\mid = 6$ gives $r=4$. Hence we get $x=4a_i$ and $3a_i=2a_k$ which is the case $(5)$.
Now if $a_i, a_k, a_l$ are distinct then $\mid B^*(x)\mid =r+4$. As $\mid B^*(x)\mid = 6$ therefore $r=2$ and we get the case $(6)$. Also if $x=a_k+2a_l$ and $x=ra_i$ then again we get the cases $(5)$ and $(6)$.

$\mathbf{Case-4:}$ If $\mid L^{(1)}_x \mid =2$ and  $\mid L^{(2)}_x \mid =1$, then $x=ra_i$, $x=sa_j$ and $x= ua_k+va_l$, where $r,s\geq 2$ and $u,v\geq 1$. Assume that $a_i<a_j$ then $1<s<r$ and $r$ is not a multiple of $s$. By Proposition \ref{p1}(2), we have $x=a_k+a_l$ or $x=2a_k+a_l$ or $x=a_k+2a_l$. For $u=2$ and $v=1$, we get $B^*(x)=\{a_k, a_l, 2a_k, a_k+a_l, a_j, 2a_j, \dots, (s-1)a_j, a_i, 2a_i, \dots, ra_i\}$. Similarly, for $u=1$ and $v=2$ we get $B^*(x)=\{a_k, a_l, 2a_l, a_k+a_l, a_j, 2a_j, \dots, (s-1)a_j, a_i, 2a_i, \dots, ra_i\}$. Note that $a_i$ or $a_j$ can not be equal to $a_k$, otherwise we get $a_l$ is a multiple of $a_i$ or $a_j$. Now if $a_i=a_l$ or $a_j=a_l$ then  $\mid B^*(x)\mid = r+s+2$ and if $a_i, a_j, a_k, a_l$  are different then $\mid B^*(x)\mid = r+s+3$, as  $1<s<r$ therefore $\mid B^*(x)\mid >6$. Now if $u=1,v=1$ then  $a_i, a_j, a_k, a_l$  are different and $B^*(x)=\{a_k, a_l,  a_j, 2a_j, \dots, (s-1)a_j, a_i, 2a_i, \dots, ra_i \}$. This gives $\mid B^*(x)\mid = r+s+1$. $\mid B^*(x)\mid = 6$ gives $r=3$ and $s=2$ which is the case $(7)$.

$\mathbf{Case-5:}$ If $\mid L^{(1)}_x \mid =1$ and  $  \mid L^{(2)}_x \mid =2$, then $x=u_1a_k+v_1a_l$, $x=u_2a_m+v_2a_n$ and $x= r a_i$, where $r\geq 2$ and $u_1, u_2, v_1, v_2\geq 1$.  By Proposition \ref{p1}(2), we have $x=a_k+a_l$ and $x=a_m+a_n$. In this case we have $ B^*(x)=\{a_k, a_l, a_m, a_n, a_i, 2a_i, \dots, r a_i\}$. Since $a_k, a_l, a_m, a_n, a_i$  are distinct, it follows that $\mid B^*(x)\mid = r+4$. $\mid B^*(x)\mid = 6$ gives $r=2$ which is  the case $(8)$.
\end{proof}
\begin{thm}\label{t2}
Let $\Lambda=<A>$ be a numerical semigroup of embedding dimension $n\geq 2$. If $\mid G_I(\Lambda)\mid = 7$ then $x\in \Lambda$ satisfy one of the following:
\begin{enumerate}
  \item  $x=7a_i$.
  \item  $x=5a_i=3a_j$.
  \item  $x=3a_i+a_j$ or $x=a_i+3a_j$.
  \item  $x=5a_i$ and $x=a_j+a_k$.
  \item  $x=3a_i$ and $x=2a_j+a_k$ or $x=a_j+2a_k$.
  \item  $x=a_i+a_j$ and $x=2a_k+a_l$.
  \item  $x=3a_i$ and $x=a_k+a_l$ and $x=a_m+a_n$.
  \item  $x=a_i+a_j$ and $x=a_k+a_l$ and $x=a_m+a_n$.
  \item  $x=a_i+a_j+a_k$.
\end{enumerate}
\end{thm}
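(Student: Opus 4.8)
The plan is to run the same machinery used in the proof of Theorem \ref{t1}, now against the exclusion list of Proposition \ref{p2} and with target count $|B^*(x)|=7$. First I would translate the negations of the eight conditions of Proposition \ref{p2} into an explicit list of admissible triples $(|L_x^{(1)}|,|L_x^{(2)}|,|L_x^{(3)}|)$. Conditions $(1)$--$(3)$ force $L_x^{(p)}=\emptyset$ for all $p\geq 4$, force $|L_x^{(3)}|\leq 1$, and force that $|L_x^{(3)}|=1$ can occur only when $L_x^{(1)}=L_x^{(2)}=\emptyset$; conditions $(4)$--$(8)$ give $|L_x^{(1)}|\leq 2$, $|L_x^{(2)}|\leq 3$, and forbid the mixed overloads ($|L_x^{(2)}|=3$ with $L_x^{(1)}\neq\emptyset$, $|L_x^{(2)}|=2$ with $|L_x^{(1)}|\geq 2$, and $|L_x^{(2)}|=1$ with $|L_x^{(1)}|\geq 2$). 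The surviving non-trivial configurations are exactly $(1,0,0)$, $(2,0,0)$, $(0,1,0)$, $(1,1,0)$, $(0,2,0)$, $(1,2,0)$, $(0,3,0)$ and $(0,0,1)$, and I would show these produce cases $(1)$--$(9)$ respectively, with $(1,1,0)$ splitting into the two cases $(4)$ and $(5)$ according to the exponent pattern of the single two-generator relation.

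For each configuration I would write the generic multi-exponent form of $x$ that the configuration dictates, use Lemma \ref{l1} to list $B^*(x)$ explicitly, and impose $|B^*(x)|=7$ to pin down the exponents. The pure single-generator configurations $(1,0,0)$ and $(2,0,0)$ repeat Cases $1$--$2$ of Theorem \ref{t1} verbatim: $x=la_i$ gives $l=7$, while $x=ua_i=va_j$ with $a_i<a_j$ gives $u+v-1=7$ subject to $u$ not being a multiple of $v$, whose only admissible solution is $(u,v)=(5,3)$. For configurations carrying a single two-generator relation I would argue that both its exponents cannot be $\geq 2$, since the product lattice of Lemma \ref{l1} would already force $|B^*(x)|\geq 8$; hence the relation is $a_k+a_l$, $2a_k+a_l$ or $a_k+2a_l$. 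Substituting and counting then fixes $r=5$ for the $1+1$ split (Case $4$) and $r=3$ for the $2+1$ split (Case $5$) when a single-generator relation is also present, and forces $(u+1)(v+1)=8$, hence $x=3a_i+a_j$ or $x=a_i+3a_j$, when it is absent (Case $3$). The configurations $(0,2,0)$, $(1,2,0)$ and $(0,3,0)$ go the same way: overflow forces all but at most one of the two-generator relations to be of the minimal form $a+b$, after which $|B^*(x)|=7$ selects Cases $6$, $7$ and $8$. Finally $(0,0,1)$ gives $x=u_1a_i+u_2a_j+u_3a_k$, and since any $u_s\geq 2$ makes $(u_1+1)(u_2+1)(u_3+1)-1\geq 11$, all $u_s=1$, which is Case $9$.

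The routine part is the arithmetic of these counts; the part needing genuine care is the distinctness bookkeeping, namely verifying that the listed elements of $B^*(x)$ are pairwise distinct and that $B^*(x)$ contains nothing further. I expect the main obstacle, exactly as in Theorem \ref{t1}, to be ruling out or absorbing the degenerate coincidences among the generators occurring in different relations (for example $a_i=a_k$ or $a_i=a_l$ in the $(1,1,0)$ case, or overlaps across the two or three relations in the $(0,2,0)$, $(1,2,0)$, $(0,3,0)$ cases). I would dispose of these as there: a coincidence either contradicts the minimality of $A$ (a generator becoming a proper multiple of another) or manufactures an extra element of some $L_x^{(p)}$, contradicting the multiplicity fixed by the configuration; for the last, one may also observe that any internal collision $pa_k=qa_l$ inside a single two-generator relation would produce either a second two-generator relation or a single-generator relation, again violating the configuration. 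The handful of genuinely surviving coincidences are then checked by hand to give $|B^*(x)|\neq 7$. Collecting the outcomes over the eight configurations yields precisely the nine listed forms.
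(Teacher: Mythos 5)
Your proposal takes essentially the same route as the paper: the paper's proof of Theorem~\ref{t2} likewise reduces, via Proposition~\ref{p2}, to exactly the same list of admissible configurations of $(|L_x^{(1)}|,|L_x^{(2)}|,|L_x^{(3)}|)$ — namely $(1,0,0)$, $(2,0,0)$, $(0,1,0)$, $(1,1,0)$, $(0,2,0)$, $(1,2,0)$, $(0,3,0)$, $(0,0,1)$ — and then states only that each possibility "can be checked in a similar way as in Theorem~\ref{t1}." Your case-by-case counting and the discussion of degenerate coincidences simply supply the details the paper leaves implicit, and they check out.
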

\begin{proof}
Given that $\mid G_I(\Lambda)\mid = 7$, then from Proposition \ref{p2}, it follows that $x$ satisfy one of the following condition:
\begin{itemize}
  \item $ \mid L^{(1)}_x \mid =1$ and  $  L^{(p)}_x  =\emptyset$, $\forall$ $p\geq 2$,
  \item $ \mid L^{(1)}_x \mid =2$ and  $  L^{(p)}_x  =\emptyset$, $\forall$ $p\geq 2$,
  \item $L^{(1)}_x  =\emptyset,$, $\mid L^{(2)}_x \mid =1$, $L^{(p)}_x  =\emptyset,$  $\forall$ $p\geq 3$,
  \item $\mid L^{(1)}_x \mid =1$, $\mid L^{(2)}_x \mid =1$,  $L^{(p)}_x  =\emptyset$, $\forall$ $p\geq 3$,
  \item $\mid L^{(1)}_x \mid =1$, $\mid L^{(2)}_x \mid =1$,  $L^{(p)}_x  =\emptyset$, $\forall$ $p\geq 3$,
  \item $L^{(1)}_x  =\emptyset,$, $\mid L^{(2)}_x \mid =2$,  $L^{(p)}_x  =\emptyset$, $\forall$ $p\geq 3$,
  \item $\mid L^{(1)}_x \mid =1$, $\mid L^{(2)}_x \mid =2$,  $L^{(p)}_x  =\emptyset$, $\forall$ $p\geq 3$,
  \item $L^{(1)}_x  =\emptyset$, $\mid L^{(2)}_x \mid =3$,  $L^{(p)}_x  =\emptyset$, $\forall$ $p\geq 3$,
  \item $\mid L^{(3)}_x \mid =1$, $L^{(p)}_x  =\emptyset$, $\forall$ $p\neq 3$ and $x=a_i+a_j+a_k$.
\end{itemize}
These possibilities can be checked in a similar way as we did in Theorem \ref{t1} to get the required result.
\end{proof}

Theorem \ref{t1} and Theorem \ref{t2} give us all irreducible ideals $I$ such that $G_I(\Lambda)$ is of order 6 or order 7. This is easy to see that there are some cases where we get the graphs $G_I(\Lambda)$ which are isomorphic to each other. Our next propositions classify all graphs $G_I(\Lambda)$ upto isomorphism of order 6 and order 7.
\begin{cor}
A graph $G_I(\Lambda)$ of order $6$ is isomorphic to one of the graphs given in the Table $1$:
\begin{table}[hbt]
\begin{tabular}{|c|c|c|c|}
Type & \text{Degree sequence}  &  Graph  \\\hline
 $1$	& $(1, 2, 3, 3, 4, 5)$ & \setlength{\unitlength}{0.4cm}
             \begin{picture}(12,6)
              \put(5,5){\circle*{0.2}}
              \put(4.5,5){}
              \put(7,5){\circle*{0.2}}
\put(7.2,5){}
\put(5,1){\circle*{0.2}}
\put(4.3,1){}
\put(7,1){\circle*{0.2}}
\put(7.2,1){}
\put(4,3){\circle*{0.2}}
\put(3.3,3){}
\put(8,3){\circle*{0.2}}
\put(8.2,3){}
\put(5,5){\line(1,0){2}}
\put(5,1){\line(1,0){2}}
\put(4,3){\line(1,0){4}}
\put(4,3){\line(3,2){3}}
\put(5,1){\line(3,2){3}}
\put(5,1){\line(1,2){2}}
\put(7,1){\line(1,2){1}}
\put(8,3){\line(-1,2){1}}
\put(7,1){\line(0,1){4}}
\put(5.5,0){}
\end{picture}
\setlength{\unitlength}{0.4cm} \\\hline

$2$	& $(2, 3, 3, 4, 5, 5)$ & \setlength{\unitlength}{0.4cm}
\begin{picture}(12,6)
\put(5,5){\circle*{0.2}}
\put(4.5,5){}
\put(7,5){\circle*{0.2}}
\put(7.2,5){}
\put(5,1){\circle*{0.2}}
\put(4.3,1){}
\put(7,1){\circle*{0.2}}
\put(7.2,1){}
\put(4,3){\circle*{0.2}}
\put(3.3,3){}
\put(8,3){\circle*{0.2}}
\put(8.2,3){}
\put(5,5){\line(1,0){2}}
\put(4,3){\line(1,2){1}}
\put(4,3){\line(1,-2){1}}
\put(4,3){\line(1,0){4}}
\put(4,3){\line(3,2){3}}
\put(5,1){\line(3,2){3}}
\put(5,1){\line(1,2){2}}
\put(7,1){\line(1,2){1}}
\put(8,3){\line(-1,2){1}}
\put(7,1){\line(0,1){4}}
\put(4,3){\line(3,-2){3}}
\put(5.5,0){}
\end{picture}
\setlength{\unitlength}{0.4cm} \\\hline

$3$	& $(2, 3, 4, 4, 4, 5)$ & \setlength{\unitlength}{0.4cm}
\begin{picture}(12,6)
\put(5,5){\circle*{0.2}}
\put(4.5,5){}
\put(7,5){\circle*{0.2}}
\put(7.2,5){}
\put(5,1){\circle*{0.2}}
\put(4.3,1){}
\put(7,1){\circle*{0.2}}
\put(7.2,1){}
\put(4,3){\circle*{0.2}}
\put(2.6,3){}
\put(8,3){\circle*{0.2}}
\put(8.2,3){}
\put(5,5){\line(1,0){2}}
\put(5,1){\line(1,0){2}}
\put(4,3){\line(3,2){3}}
\put(5,1){\line(3,2){3}}
\put(5,1){\line(1,2){2}}
\put(7,1){\line(1,2){1}}
\put(8,3){\line(-1,2){1}}
\put(7,1){\line(0,1){4}}
\put(4,3){\line(3,-2){3}}
\put(4,3){\line(1,-2){1}}
\put(5,5){\line(-1,-2){1}}
\put(5.5,0){}
\end{picture}
\setlength{\unitlength}{0.4cm} \\\hline

$4$	& $(3, 3, 4, 4, 5, 5)$ & \setlength{\unitlength}{0.4cm}
\begin{picture}(12,6)
\put(5,5){\circle*{0.2}}
\put(4.5,5){}
\put(7,5){\circle*{0.2}}
\put(7.2,5){}
\put(5,1){\circle*{0.2}}
\put(4.4,1){}
\put(7,1){\circle*{0.2}}
\put(7.2,1){}
\put(4,3){\circle*{0.2}}
\put(3.3,3){}
\put(8,3){\circle*{0.2}}
\put(8.2,3){}
\put(5,5){\line(1,0){2}}
\put(4,3){\line(3,2){3}}
\put(5,1){\line(3,2){3}}
\put(5,1){\line(1,2){2}}
\put(7,1){\line(1,2){1}}
\put(8,3){\line(-1,2){1}}
\put(7,1){\line(0,1){4}}
\put(4,3){\line(3,-2){3}}
\put(5,5){\line(3,-2){3}}
\put(5,5){\line(0,-1){4}}
\put(5,5){\line(-1,-2){1}}
\put(5,5){\line(1,-2){2}}
\put(5.5,0){}
\end{picture}
\setlength{\unitlength}{0.4cm} \\\hline

$5$	& $(3, 4, 4, 4, 4, 5)$  & \setlength{\unitlength}{0.4cm}
\begin{picture}(12,6)
\put(5,5){\circle*{0.2}}
\put(4.5,5){}
\put(7,5){\circle*{0.2}}
\put(7.2,5){}
\put(5,1){\circle*{0.2}}
\put(4.3,1){}
\put(7,1){\circle*{0.2}}
\put(7.2,1){}
\put(4,3){\circle*{0.2}}
\put(3.3,3){}
\put(8,3){\circle*{0.2}}
\put(8.2,3){}
\put(5,5){\line(1,0){2}}
\put(4,3){\line(1,2){1}}
\put(4,3){\line(1,-2){1}}
\put(5,1){\line(1,0){2}}
\put(4,3){\line(1,0){4}}
\put(4,3){\line(3,2){3}}
\put(5,1){\line(3,2){3}}
\put(5,1){\line(1,2){2}}
\put(7,1){\line(1,2){1}}
\put(8,3){\line(-1,2){1}}
\put(7,1){\line(0,1){4}}
\put(5,5){\line(1,-2){2}}
\put(5.5,0){}
\end{picture}
\setlength{\unitlength}{0.4cm} \\\hline

$6$	&$(4, 4, 4, 4, 5, 5)$  & \setlength{\unitlength}{0.4cm}
\begin{picture}(12,6)
\put(5,5){\circle*{0.2}}
\put(4.5,5){}
\put(7,5){\circle*{0.2}}
\put(7.2,5){}
\put(5,1){\circle*{0.2}}
\put(4.3,1){}
\put(7,1){\circle*{0.2}}
\put(7.2,1){}
\put(4,3){\circle*{0.2}}
\put(3.3,3){}
\put(8,3){\circle*{0.2}}
\put(8.2,3){}
\put(5,5){\line(1,0){2}}
\put(4,3){\line(1,0){4}}
\put(4,3){\line(3,2){3}}
\put(5,1){\line(3,2){3}}
\put(5,1){\line(1,2){2}}
\put(8,3){\line(-1,2){1}}
\put(7,1){\line(0,1){4}}
\put(4,3){\line(1,-2){1}}
\put(8,3){\line(-1,-2){1}}
\put(5,5){\line(-1,-2){1}}
\put(5,5){\line(1,-2){2}}
\put(5,1){\line(1,0){2}}
\put(5,5){\line(3,-2){3}}
\put(5.5,0){}
\end{picture}
\setlength{\unitlength}{0.4cm}
\end{tabular}
\caption{ }
\label{1}
\end{table}
\end{cor}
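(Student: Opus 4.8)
The plan is to run through the eight possibilities for $x$ furnished by Theorem \ref{t1}, compute $G_I(\Lambda)$ in each, and match it with one of the six graphs of Table 1. The key simplification is to pass to the complementary adjacency relation. Since $I$ is irreducible we have $V(G_I(\Lambda))=B^*(x)$ and, by definition of $B^*(x)=B(x)\setminus\{0\}$, also $B(x)=\{0\}\cup B^*(x)$. Two vertices $y,z\in B^*(x)$ are adjacent precisely when $y+z\in I=\Lambda\setminus B(x)$; as $y+z\in\Lambda$ and $y+z>0$, this means $y,z$ are \emph{non}-adjacent if and only if $y+z\in B^*(x)$, i.e. the sum of two vertices is again a vertex. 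Hence it suffices to determine, in each case, the complement $\overline{G_I(\Lambda)}$, whose edges are exactly the pairs of vertices whose sum lies in the explicitly known six-element set $B^*(x)$ supplied by Theorem \ref{t1}.

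For each case I would enumerate these additive pairs directly from the listed form of $B^*(x)$ and the defining relation(s) on $x$. Three regimes occur: if $y+z>x$ then $y+z\notin B(x)$, an edge of $G_I(\Lambda)$; if $y+z=x$ then $y+z\in B(x)$, a non-edge, and these pairs are exactly the representations of $x$ singled out by the case; and if $y+z<x$ one must check whether $y+z$ coincides with a listed vertex. Carrying this out gives, for the complement: case $(1)$ a threshold-type graph with complement-degree sequence $(4,3,2,2,1,0)$; cases $(2)$ and $(4)$ both $P_3\sqcup K_2\sqcup K_1$; case $(3)$ a triangle-with-pendant (paw) plus two isolated vertices; case $(5)$ a five-vertex tree plus an isolated vertex; case $(6)$ $P_4\sqcup 2K_1$; and cases $(7)$ and $(8)$ both $2K_2\sqcup 2K_1$. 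Complementing back, $G_I(\Lambda)$ acquires the degree sequences $(1,2,3,3,4,5)$ in case $(1)$, $(3,4,4,4,4,5)$ in cases $(2),(4)$, $(2,3,3,4,5,5)$ in case $(3)$, $(2,3,4,4,4,5)$ in case $(5)$, $(3,3,4,4,5,5)$ in case $(6)$, and $(4,4,4,4,5,5)$ in cases $(7),(8)$; these are Types $1,5,2,3,4,6$ of Table 1, respectively.

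The step requiring genuine care, and the main obstacle, is the regime $y+z<x$, where I must rule out spurious additive coincidences that would introduce extra non-edges and alter the graph. Concretely one has to show that a sum such as $a_i+a_j$ cannot equal another generator $a_k$ or a forbidden multiple $m a_i$, and that no unrecorded second representation of an intermediate element appears. This is exactly where the case hypotheses enter: the minimality of the generating set $A$ forbids any generator from being a positive combination of the others, while the representation-count conditions defining the case (for instance that $x$ has a unique two-generator representation, or that the relevant $L^{(p)}_x$ are empty) exclude precisely the coincidences that would otherwise arise. Once these exclusions are established the complement in each case is forced by the case data alone, independently of the particular $\Lambda$, so each order-$6$ graph $G_I(\Lambda)$ is determined up to isomorphism.

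Finally, since the six degree sequences above are pairwise distinct, identifying the complement in a given case pins down $G_I(\Lambda)$ as the correspondingly labelled graph in Table 1; for the two coincidences, cases $(2),(4)$ and cases $(7),(8)$, the isomorphism is immediate from the identical complement structure ($P_3\sqcup K_2\sqcup K_1$ and $2K_2\sqcup 2K_1$ respectively). This shows that every $G_I(\Lambda)$ of order $6$ is isomorphic to one of the six graphs of Table 1.
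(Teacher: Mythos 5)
Your proposal is correct and follows the same route the paper intends: the paper disposes of this corollary with ``an easy consequence of Theorem \ref{t1},'' i.e.\ one runs through the eight forms of $x$, computes $G_I(\Lambda)$ from the explicit six-element set $B^*(x)$ in each case, and matches the results (your case-by-case assignments to Types $1$--$6$, including the coincidences $(2)\cong(4)$ and $(7)\cong(8)$, all check out). Your reformulation via the complement --- non-adjacency of $y,z$ iff $y+z\in B^*(x)$ --- is a clean way to organize the computation that the paper leaves implicit, and you correctly isolate the only delicate point, namely ruling out spurious coincidences $y+z\in B^*(x)$ for $y+z<x$ using minimality of $A$ and the emptiness of the relevant $L^{(p)}_x$.
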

\begin{proof}
This is an easy consequence of Theorem \ref{t1}.
\end{proof}

\begin{cor}
A graph $G_I(\Lambda)$ of order $7$ is isomorphic to one of the graphs given in the Table $2$:
\begin{table}[hbt]
\begin{tabular}{|c|c|c|c|}
Type & \text{Degree sequence}  &  Graph  \\\hline
 $1$	& $(1, 2, 3, 3, 4, 5, 6)$ & \setlength{\unitlength}{0.4cm}
             \begin{picture}(12,6)
\put(6,5){\circle*{0.2}}
\put(5.3,5){}
\put(8,4){\circle*{0.2}}
\put(8.2,4){}
\put(4,4){\circle*{0.2}}
\put(3.4,4){}
\put(5,2){\circle*{0.2}}
\put(4.3,2){}
\put(7,2){\circle*{0.2}}
\put(7.3,2){}
\put(4,3){\circle*{0.2}}
\put(3.3,3){}
\put(8,3){\circle*{0.2}}
\put(8.2,3){}
\put(6,5){\line(-2,-1){2}}
\put(6,5){\line(-1,-1){2}}
\put(6,5){\line(-1,-3){1}}
\put(6,5){\line(1,-3){1}}
\put(6,5){\line(1,-1){2}}
\put(6,5){\line(2,-1){2}}
\put(4,3){\line(4,1){4}}
\put(5,2){\line(3,1){3}}
\put(5,2){\line(3,2){3}}
\put(7,2){\line(1,1){1}}
\put(7,2){\line(1,2){1}}
\put(8,3){\line(0,1){1}}
\put(5.5,1){}
\end{picture}
\setlength{\unitlength}{0.4cm} \\\hline

$2$	& $(2, 3, 3, 4, 5, 5, 6)$ & \setlength{\unitlength}{0.4cm}

\begin{picture}(12,7)
\put(5,6){\circle*{0.2}}
\put(4,6.2){}
\put(7,6){\circle*{0.2}}
\put(7,6.2){}
\put(3,4){\circle*{0.2}}
\put(2.6,4){}
\put(5,2){\circle*{0.2}}
\put(5,1.6){}
\put(7,2){\circle*{0.2}}
\put(7,1.5){}
\put(9,3){\circle*{0.2}}
\put(9.2,3){}
\put(9,5){\circle*{0.2}}
\put(9.2,5){}
\put(3,4){\line(1,-1){2}}
\put(3,4){\line(2,-1){4}}
\put(3,4){\line(6,-1){6}}
\put(3,4){\line(6,1){6}}
\put(3,4){\line(2,1){4}}
\put(3,4){\line(1,1){2}}
\put(5,2){\line(4,1){4}}
\put(7,2){\line(2,1){2}}
\put(7,2){\line(-1,2){2}}
\put(9,3){\line(-2,3){2}}
\put(9,3){\line(-4,3){4}}
\put(9,5){\line(-2,1){2}}
\put(9,5){\line(-4,1){4}}
\put(7,6){\line(-1,0){2}}
\put(5.5,1){}
\end{picture}
\setlength{\unitlength}{0.4cm} \\\hline

$3$	&$(3, 3, 3, 5, 5, 5, 6)$  & \setlength{\unitlength}{0.4cm}
\begin{picture}(12,7)
\put(5,6){\circle*{0.2}}
\put(4.3,6.2){}
\put(7,6){\circle*{0.2}}
\put(7.2,6){}
\put(3,4){\circle*{0.2}}
\put(2.6,4){}
\put(5,2){\circle*{0.2}}
\put(5,1.6){}
\put(7,2){\circle*{0.2}}
\put(7,1.5){}
\put(9,3){\circle*{0.2}}
\put(9.2,3){}
\put(9,5){\circle*{0.2}}
\put(9.2,5){}
\put(3,4){\line(1,-1){2}}
\put(3,4){\line(2,-1){4}}
\put(3,4){\line(6,-1){6}}
\put(3,4){\line(6,1){6}}
\put(3,4){\line(2,1){4}}
\put(3,4){\line(1,1){2}}
\put(5,2){\line(4,3){4}}
\put(5,2){\line(1,2){2}}
\put(7,2){\line(2,3){2}}
\put(7,2){\line(-1,2){2}}
\put(9,3){\line(-2,3){2}}
\put(9,3){\line(-4,3){4}}
\put(9,5){\line(-2,1){2}}
\put(9,5){\line(-4,1){4}}
\put(7,6){\line(-1,0){2}}
\put(5.5,1){}
\end{picture}
\setlength{\unitlength}{0.4cm} \\\hline

$4$	& $(3, 4, 4, 5, 5, 5, 6)$  & \setlength{\unitlength}{0.4cm}
\begin{picture}(12,7)
\put(5,6){\circle*{0.2}}
\put(4.2,6){}
\put(7,6){\circle*{0.2}}
\put(7.2,6){}
\put(3,4){\circle*{0.2}}
\put(2.6,4){}
\put(5,2){\circle*{0.2}}
\put(5,1.6){}
\put(7,2){\circle*{0.2}}
\put(7,1.5){}
\put(9,3){\circle*{0.2}}
\put(9.2,3){}
\put(9,5){\circle*{0.2}}
\put(9.2,5){}
\put(3,4){\line(1,-1){2}}
\put(3,4){\line(2,-1){4}}
\put(3,4){\line(6,-1){6}}
\put(3,4){\line(6,1){6}}
\put(3,4){\line(2,1){4}}
\put(3,4){\line(1,1){2}}
\put(5,2){\line(1,2){2}}
\put(5,2){\line(0,1){4}}
\put(7,2){\line(2,3){2}}
\put(7,2){\line(0,1){4}}
\put(7,2){\line(-1,2){2}}
\put(9,3){\line(0,1){2}}
\put(9,3){\line(-2,3){2}}
\put(9,3){\line(-4,3){4}}
\put(9,5){\line(-2,1){2}}
\put(9,5){\line(-4,1){4}}
\put(5.5,1){}
\end{picture}
\setlength{\unitlength}{0.4cm} \\\hline

$5$	& $(4, 4, 5, 5, 5, 5, 6)$ & \setlength{\unitlength}{0.4cm}
\begin{picture}(12,7)
\put(5,6){\circle*{0.2}}
\put(4.3,6.2){}
\put(7,6){\circle*{0.2}}
\put(7.2,6){}
\put(3,4){\circle*{0.2}}
\put(2.6,4){}
\put(5,2){\circle*{0.2}}
\put(5,1.6){}
\put(7,2){\circle*{0.2}}
\put(7,1.5){}
\put(9,3){\circle*{0.2}}
\put(9.2,3){}
\put(9,5){\circle*{0.2}}
\put(9.2,5){}
\put(3,4){\line(1,-1){2}}
\put(3,4){\line(2,-1){4}}
\put(3,4){\line(6,-1){6}}
\put(3,4){\line(6,1){6}}
\put(3,4){\line(2,1){4}}
\put(3,4){\line(1,1){2}}
\put(5,2){\line(1,0){2}}
\put(5,2){\line(4,1){4}}
\put(5,2){\line(4,3){4}}
\put(7,2){\line(2,1){2}}
\put(7,2){\line(2,3){2}}
\put(7,2){\line(-1,2){2}}
\put(9,3){\line(-2,3){2}}
\put(9,3){\line(-4,3){4}}
\put(9,5){\line(-2,1){2}}
\put(9,5){\line(-4,1){4}}
\put(7,6){\line(-1,0){2}}
\put(5.5,1){}
\end{picture}
\setlength{\unitlength}{0.4cm} \\\hline

$6$	& $(5, 5, 5, 5, 5, 5, 6)$  & \setlength{\unitlength}{0.4cm}
\begin{picture}(12,7)
\put(5,6){\circle*{0.2}}
\put(4.4,6){}
\put(7,6){\circle*{0.2}}
\put(7.2,6){}
\put(3,4){\circle*{0.2}}
\put(2.6,4){}
\put(5,2){\circle*{0.2}}
\put(5,1.6){}
\put(7,2){\circle*{0.2}}
\put(7,1.5){}
\put(9,3){\circle*{0.2}}
\put(9.2,3){}
\put(9,5){\circle*{0.2}}
\put(9.2,5){}
\put(3,4){\line(1,-1){2}}
\put(3,4){\line(2,-1){4}}
\put(3,4){\line(6,-1){6}}
\put(3,4){\line(6,1){6}}
\put(3,4){\line(2,1){4}}
\put(3,4){\line(1,1){2}}
\put(5,2){\line(4,1){4}}
\put(5,2){\line(4,3){4}}
\put(5,2){\line(1,2){2}}
\put(5,2){\line(0,1){4}}
\put(7,2){\line(2,1){2}}
\put(7,2){\line(2,3){2}}
\put(7,2){\line(0,1){4}}
\put(7,2){\line(-1,2){2}}
\put(9,3){\line(-2,3){2}}
\put(9,3){\line(-4,3){4}}
\put(9,5){\line(-2,1){2}}
\put(9,5){\line(-4,1){4}}
\put(5.5,1){}
\end{picture}
\setlength{\unitlength}{0.4cm}
\end{tabular}
\caption{ }
\label{2}
\end{table}
\end{cor}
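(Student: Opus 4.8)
The plan is to read off the six graphs directly from the nine arithmetic shapes of $x$ supplied by Theorem~\ref{t2}, by drawing each graph $G_I(\Lambda)$ explicitly and then sorting the outcomes into isomorphism classes. For a fixed $x$ on that list the vertex set is $B^*(x)$, which the proof of Theorem~\ref{t2} (via Lemma~\ref{l1}) already pins down in each case, and the adjacency is the one built into the definition of $G_I(\Lambda)$: two vertices $u,v\in B^*(x)$ are joined exactly when $u+v\in I$, that is when $u+v\notin B(x)$, which by the definition of $B(x)$ means $x-(u+v)\notin\Lambda$. So the entire computation reduces to deciding, for each of the $\binom{7}{2}=21$ unordered pairs, whether $x-(u+v)$ belongs to $\Lambda$.

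First I would isolate one elementary fact that settles almost every adjacency test: if $a,b$ are distinct minimal generators then $a-b\notin\Lambda$, since otherwise $a=b+(a-b)$ would be a nontrivial decomposition of $a$. Together with the equivalence $u+v\in B(x)\iff x-(u+v)\in\Lambda$, this reduces each membership question to checking whether $x-(u+v)$ is a nonnegative combination of generators, which can be read off directly. For example, if $x=7a_i$ the vertices are $a_i,2a_i,\dots,7a_i$ and $pa_i\sim qa_i$ iff $p+q\ge 8$, which gives the degree sequence $(1,2,3,3,4,5,6)$; if $x=a_i+a_j+a_k$ the vertices are the seven nonempty sub-sums and $u\sim v$ iff the index sets occurring in $u$ and $v$ overlap, which gives $(3,3,3,5,5,5,6)$. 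Running through all nine shapes yields precisely the six degree sequences of Table~\ref{2}, and in every case the element $x$ itself is adjacent to all other vertices (because $x-(x+v)=-v\notin\Lambda$), so $x$ is the unique vertex of degree $6$.

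Next I would record how the nine shapes collapse to six graphs. The shapes $x=7a_i$, $x=3a_i+a_j$ (with its mirror $x=a_i+3a_j$), and $x=a_i+a_j+a_k$ give Types~$1$, $2$ and $3$ respectively, while the three pairs $\{\,x=5a_i=3a_j,\ \ x=5a_i=a_j+a_k\,\}$, $\{\,x=3a_i=2a_j+a_k,\ \ x=a_i+a_j=2a_k+a_l\,\}$ and $\{\,x=3a_i=a_k+a_l=a_m+a_n,\ \ x=a_i+a_j=a_k+a_l=a_m+a_n\,\}$ produce Types~$4$, $5$ and $6$. Since the six degree sequences in Table~\ref{2} are pairwise distinct, the six graphs are pairwise non-isomorphic, so the six types cannot be confused with one another. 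It remains to check that the two shapes in each coincident pair really give the same graph: for this I would map the degree-$6$ vertex $x$ to $x$, match the remaining vertices by degree, and verify edge preservation. For Type~$6$, say, both shapes realise the graph obtained from $K_6$ by deleting a perfect matching (the complementary generator pairs) and adjoining an apex joined to all six vertices, which is a single graph up to isomorphism.

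The main obstacle will not be the degree bookkeeping but two subtler points. First, one must justify that $B^*(x)$ contains nothing beyond the listed elements, i.e.\ that no accidental divisibility among the generators enlarges the vertex set; this is exactly where the minimality fact $a-b\notin\Lambda$ is essential, and it has to be applied with care in the mixed shapes such as $x=3a_i=2a_j+a_k$, where multiples of $a_i$ and sub-sums of $2a_j+a_k$ interact. Second, equality of degree sequences does not by itself force isomorphism, so the vertex bijections in the three coincident pairs must be checked edge-by-edge; this is made tractable by the rigidity of the graphs, since each has a unique degree-$6$ vertex and, in its low-degree part, repeated (twin) neighbourhoods that leave essentially no freedom in the matching.
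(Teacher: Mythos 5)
Your plan is correct and matches what the paper intends: its entire proof of this corollary is the one line ``this is an easy consequence of Theorem~\ref{t2},'' and your explicit determination of $B^*(x)$ together with the adjacency test $u\sim v \iff x-(u+v)\notin\Lambda$ for each of the nine shapes, followed by grouping into isomorphism classes, is exactly the omitted verification. Your assignment of the nine cases to the six types and the resulting degree sequences all check out, so there is nothing substantive to compare beyond the fact that you supply the details the paper leaves out.
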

\begin{proof}
This is an easy consequence of Theorem \ref{t2}.
\end{proof}
\begin{thm}
Let $\Lambda$ be a numerical semigroup of embedding dimension $n\geq2$. Then  $G_I(\Lambda)$ is planar if one of the following hold:
 \begin{enumerate}
          \item  $\mid G_I(\Lambda)\mid \leq 5$.
          \item  $\mid G_I(\Lambda)\mid = 6$ and $G_I(\Lambda)$ is of type $1, 2, 3$ or $4$.
          \item  $\mid G_I(\Lambda)\mid = 7$ and $G_I(\Lambda)$ is of type $1, 2$ or $3$.
\end{enumerate}
\end{thm}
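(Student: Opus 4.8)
The plan is to split the statement along its three clauses, handling the first by a counting argument via Kuratowski's theorem and the last two by exhibiting explicit plane embeddings, relying on the classification already established.

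For clause (1), when $|G_I(\Lambda)|\le 4$ the graph is automatically planar, since any subgraph homeomorphic to $K_5$ has at least five vertices and any subgraph homeomorphic to $K_{3,3}$ has at least six. For $|G_I(\Lambda)|=5$ a homeomorph of $K_{3,3}$ is still impossible on five vertices, while a homeomorph of $K_5$ on exactly five vertices must be $K_5$ itself. But the clique-number formula recalled in the introduction gives $cl(G_I(\Lambda))=\frac{5+1}{2}=3$ when $n=5$, so $G_I(\Lambda)$ contains no $K_5$. By Kuratowski's theorem $G_I(\Lambda)$ is therefore planar in every case with $|G_I(\Lambda)|\le 5$.

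For clauses (2) and (3) I would invoke Theorem \ref{t1}, Theorem \ref{t2} and the two corollaries, which assert that up to isomorphism there is exactly one graph of each listed type. Consequently it suffices to produce, for each of the types $1$--$4$ of order $6$ and types $1$--$3$ of order $7$, a single drawing in the plane in which no two edges meet except at a shared endpoint; such a drawing certifies planarity directly from the definition, bypassing any further appeal to Kuratowski. The drawings displayed in Table \ref{1} (types $1$--$4$) and Table \ref{2} (types $1$--$3$) are precisely such embeddings, and I would verify them edge by edge. The remaining (sparser) types among these then require no separate work.

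The step I expect to be the main obstacle is the verification for the densest admissible types, namely type $4$ of order $6$ with degree sequence $(3,3,4,4,5,5)$ and type $3$ of order $7$ with degree sequence $(3,3,3,5,5,5,6)$, since these sit just below the non-planar threshold: adding a single suitable edge pushes each into the next type, which harbours a $K_5$- or $K_{3,3}$-subdivision. For these two graphs a careless layout tends to introduce a crossing, so the genuine content is to exhibit the specific crossing-free drawing given in the tables --- for example by placing the two highest-degree vertices on a common face and routing the remaining edges around them --- and then to confirm that no $K_{3,3}$ is concealed among the degree-$4$ and degree-$5$ vertices. Once these two borderline graphs are drawn without crossings, planarity for the lower-degree types follows by the same inspection.
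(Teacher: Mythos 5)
Your proposal is correct in substance but takes a genuinely different route from the paper. For clause (1) the paper simply writes ``trivial''; your argument --- every graph on at most four vertices is planar, a homeomorph of $K_{3,3}$ needs at least six vertices, and on five vertices a homeomorph of $K_5$ is $K_5$ itself, which is excluded since the clique-number formula gives $cl(G_I(\Lambda))=3$ --- supplies exactly what is suppressed. For clauses (2) and (3) the paper argues through Kuratowski's theorem: the clique number ($4$ in both orders) rules out a $K_5$-subgraph, and for the borderline type-$4$ graph of order $6$ it enumerates the spanning subgraphs with degree sequence $(3,3,3,3,3,3)$ and checks that neither is $K_{3,3}$, then asserts that order $7$ is similar. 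You instead certify planarity by exhibiting explicit crossing-free drawings, which is logically cleaner: it sidesteps the question, which the paper's proof quietly elides, of whether a \emph{subdivided} $K_5$ or $K_{3,3}$ (rather than a literal copy) could occur on $6$ or $7$ vertices. Two caveats. First, the figures printed in Table \ref{1} and Table \ref{2} are not plane embeddings --- they are drawn with crossing chords of a hexagon or heptagon --- so you cannot simply point to them; you must actually produce and verify new drawings. Second, your instinct about where the difficulty sits is right and worth making quantitative: type $4$ of order $6$ has $12=3\cdot 6-6$ edges and type $3$ of order $7$ has $15=3\cdot 7-6$ edges, so each must be embedded as a triangulation with no slack whatsoever, and writing down those two embeddings explicitly is the entire content of the proof. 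Provided you do so, your argument is complete and, if anything, tighter than the paper's.
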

\begin{proof}
$(1)$ is trivial.\\
$(2)$ If $\mid G_I(\Lambda)\mid = 6$ and $G_I(\Lambda)$ is of type $1, 2$ or $3$ then $G_I(\Lambda)$ is trivially planar. Now if $G_I(\Lambda)$ is of type $4$ then $B^*(x)=\{a_i, a_j, a_k, a_j+a_k, 2a_j, 2a_i+a_j, x\}$ with deg$(v_{a_i})=5$, deg$(v_{a_j})=3$, deg$(v_{a_k})=3$, deg$(v_{a_j+a_k})=4$, deg$(v_{2a_j})=4$, and deg$(v_{x})=5$.\\
Since $\mid G_I(\Lambda)\mid =6$, therefore $ cl(G_I(\Lambda))=4$. This shows that $G_I(\Lambda)$ cannot have a subgraph which is isomorphic to complete graph $K_5$. Now consider a subgraph $H$ of $G_I(\Lambda)$ such that $\mid H\mid = 6$ and the degree sequence of $H$ is $(3,3,3,3,3,3)$. This give $V(H)=V(G_I(\Lambda))$ and $E(H)=E(G_I(\Lambda))-\{v_{a_i}v_{x}, v_{a_i}v_{2a_j}, v_{a_j+a_k}v_{x}\}$ or $V(H)=V(G_I(\Lambda))$ and $E(H)=E(G_I(\Lambda))-\{v_{a_i}v_{x}, v_{a_i}v_{a_j+a_k}, v_{2a_j}v_{x}\}$. In both cases, one can easily see that $H\ncong K_{3,3}$.\\
$(3)$ can be proved in a similar way as we proved $(2)$.
\end{proof}
\begin{thm}
Let $\Lambda$ be a numerical semigroup of embedding dimension $n\geq2$. Then  $G_I(\Lambda)$ is non-planar if one of the following hold:

        \begin{enumerate}
   \item $\mid G_I(\Lambda)\mid \geq 8$,
   \item $\mid G_I(\Lambda)\mid =6 $ and $G_I(\Lambda)$ is of type $5$ or $6$.
   \item $\mid G_I(\Lambda)\mid =7 $ and $G_I(\Lambda)$ is of type $4, 5$ or $6$.
\end{enumerate}
\end{thm}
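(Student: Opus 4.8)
The plan is to handle the three items by matching each to the appropriate classical non-planarity criterion, relying throughout on the two preceding corollaries, which already list every graph $G_I(\Lambda)$ of order $6$ or $7$ together with its degree sequence. For item $(1)$ I would simply reuse the clique-number formula recalled in the introduction: when $n=\mid G_I(\Lambda)\mid\geq 8$ one has $\frac{n}{2}+1\geq 5$ and $\frac{n+1}{2}\geq 5$, so $cl(G_I(\Lambda))\geq 5$ in both parities. Thus $G_I(\Lambda)$ contains a clique on $5$ vertices, hence a subgraph isomorphic to $K_5$, and is non-planar by Kuratowski's theorem. This is exactly the remark already made in the introduction and needs no new argument.

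Most of items $(2)$ and $(3)$ I would dispatch with the edge-count consequence of Euler's formula: a simple planar graph on $m\geq 3$ vertices has at most $3m-6$ edges. Reading the degree sequences off Tables $1$ and $2$ and halving their sums gives the edge counts directly. For order $6$, type $6$ has degree sequence $(4,4,4,4,5,5)$, hence $13$ edges, whereas $3\cdot 6-6=12$; so it is non-planar. For order $7$, types $4$, $5$, $6$ have $16$, $17$ and $18$ edges respectively, each strictly larger than $3\cdot 7-6=15$; so all three are non-planar. None of these four cases requires exhibiting a Kuratowski subgraph.

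The one case the edge bound cannot settle, and the main obstacle, is order $6$, type $5$: its degree sequence $(3,4,4,4,4,5)$ yields exactly $12=3\cdot 6-6$ edges, so Euler's inequality is tight and inconclusive. Here I would produce a forbidden subgraph by hand. This graph comes from $x=4a_i=3a_j$ (Case $2$ of Theorem \ref{t1}), so $B^*(x)=\{a_i,2a_i,3a_i,a_j,2a_j,x\}$, and two vertices $y,z$ are adjacent exactly when $x-(y+z)\notin\Lambda$. I claim the bipartition $\{a_i,2a_i,3a_i\}$ versus $\{a_j,2a_j,x\}$ spans a copy of $K_{3,3}$. Six of the nine cross sums, namely $a_i+x$, $2a_i+x$, $3a_i+x$, $2a_i+2a_j$, $3a_i+a_j$ and $3a_i+2a_j$, exceed $x$, so $x-(y+z)<0\notin\Lambda$ and those edges are present. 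For the three remaining pairs $a_i+a_j$, $a_i+2a_j$ and $2a_i+a_j$ the value $x-(y+z)$ is non-negative; the key structural point is that if such a sum lay in $B(x)$ it would be a vertex of $G_I(\Lambda)$ distinct from the six already listed, forcing $\mid G_I(\Lambda)\mid\geq 7$ and contradicting the hypothesis. Hence these sums lie outside $B(x)$ and the three edges are present as well, completing $K_{3,3}$. By Kuratowski's theorem type $5$ is non-planar, which finishes the proof. The delicacy lies precisely in this last step: unlike the other cases it cannot be read from a degree sequence but must be forced from the exactness $\mid B^*(x)\mid=6$.
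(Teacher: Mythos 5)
Your proof is correct, and it diverges from the paper's in a way worth noting. For item (1) you and the paper argue identically via the clique-number formula. For the five graphs of order $6$ or $7$ that must be shown non-planar, the paper only works out one case in full --- order $6$, type $5$, where it deletes the three edges $v_{a_j}v_{4a_i}$, $v_{2a_i}v_{3a_i}$, $v_{2a_j}v_{4a_i}$ to exhibit a $K_{3,3}$ on the parts $\{a_i,2a_i,3a_i\}$ and $\{a_j,2a_j,4a_i\}$ --- and dismisses the remaining four cases with ``can be proved in a similar way.'' You instead dispatch those four cases uniformly by the Euler bound $|E(G)|\leq 3|V(G)|-6$, reading the edge counts ($13$, $16$, $17$, $18$ against the bounds $12$ and $15$) directly off the degree sequences in Tables 1 and 2; this is cleaner and arguably more complete than the paper's appeal to similarity, since it requires no further Kuratowski subgraphs. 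You correctly identify that this bound is tight (hence inconclusive) precisely for order $6$, type $5$, and there your construction coincides with the paper's: the same bipartition $\{a_i,2a_i,3a_i\}$ versus $\{a_j,2a_j,x\}$ yields $K_{3,3}$. Your justification of the three non-obvious cross edges --- that $a_i+a_j$, $a_i+2a_j$, $2a_i+a_j$ cannot lie in $B(x)$ because each would be a seventh vertex, contradicting $|B^*(x)|=6$ --- is a nice structural argument that the paper leaves implicit, and it is sound because these three sums are visibly distinct from the six listed vertices. In short: same skeleton, but your edge-counting shortcut buys a complete, checkable treatment of the cases the paper waves at, at the cost of still needing the hand-built $K_{3,3}$ in the one boundary case.
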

\begin{proof}
$\mathbf{(1):}$ If $\mid G_I(\Lambda)\mid \geq 8$ then $ cl(G_I(\Lambda))\geq 5$ and therefore $G_I(\Lambda)$ must has a subgraph isomorphic to complete graph $K_5$.\newline
$\mathbf{(2):}$ Assume that  $\mid G_I(\Lambda)\mid =6 $ and $G_I(\Lambda)$ is of type $5$ then $B^*(x)=\{a_i, a_j, 2a_i, 2a_j,$ $ 3a_i, 4a_i\}$ with deg$(v_{a_i})=3$, deg$(v_{a_j})=4$, deg$(v_{2a_i})=4$, deg$(v_{2a_j})=4$, deg$(v_{3a_i})=4$ and deg$(v_{4a_i})=5$. Since $\mid G_I(\Lambda)\mid =6$, therefore $ cl(G_I(\Lambda))=4$. This shows that $G_I(\Lambda)$ cannot have a subgraph which is isomorphic to complete graph $K_5$. Consider a subgraph $H$ of $G_I(\Lambda)$ such that $V(H)=V(G_I(\Lambda))$ and $E(H)=E(G_I(\Lambda))- \{v_{a_j}v_{4a_i}, v_{2a_i}v_{3a_i}, v_{2a_j}v_{4a_i}\}$. Note that we can partition the set of vertices of $H$ into $V_1=\{a_i, 2a_i, 3a_i\}$ and $V_2=\{a_j, 2a_j, 4a_i\}$ such that no edge has both endpoints in the same subset and every possible edge that could connect vertices in different subsets is part of the graph. This implies $H$ is isomorphic to complete bipartite graph $K_{3,3}$ and therefore $G_I(\Lambda)$ is non-planar.\newline
Remaining cases of $\mathbf{(2)}$ and $\mathbf{(3)}$ can be proved in a similar way.\\

\setlength{\unitlength}{1cm}
\begin{picture}(12,5)
\put(6,4){\circle*{0.2}}
\put(5.5,4){$v_{a_i}$}
\put(8,4){\circle*{0.2}}
\put(8.2,4){$v_{4a_i}$}
\put(6,1){\circle*{0.2}}
\put(5.5,0.9){$v_{2a_j}$}
\put(8,1){\circle*{0.2}}
\put(8.2,1){$v_{3a_i}$}
\put(5,3){\circle*{0.2}}
\put(4.5,3){$v_{a_j}$}
\put(5,2){\circle*{0.2}}
\put(4.5,2){$v_{2a_i}$}
\put(6,4){\line(-1,-1){1}}
\put(6,4){\line(0,-1){3}}
\put(6,4){\line(1,0){2}}
\put(5,3){\line(0,-1){1}}
\put(5,3){\line(3,1){3}}
\put(5,3){\line(3,-2){3}}
\put(5,2){\line(1,-1){1}}
\put(5,2){\line(3,-1){3}}
\put(5,2){\line(3,2){3}}
\put(6,1){\line(1,0){2}}
\put(6,1){\line(2,3){2}}
\put(8,1){\line(0,3){3}}
\put(5.5,0){Fig-1}
\end{picture}\\
\setlength{\unitlength}{1cm}
\begin{picture}(12,5)
\put(6,4){\circle*{0.2}}
\put(6,4.2){$v_{a_i}$}
\put(7,4){\circle*{0.2}}
\put(7,4.2){$v_{2a_i}$}
\put(8,4){\circle*{0.2}}
\put(8,4.2){$v_{3a_i}$}
\put(6,2){\circle*{0.2}}
\put(6,1.5){$v_{a_j}$}
\put(7,2){\circle*{0.2}}
\put(7,1.5){$v_{2a_j}$}
\put(8,2){\circle*{0.2}}
\put(8,1.5){$v_{4a_i}$}
\put(6,4){\line(1,-1){2}}
\put(6,4){\line(0,-1){2}}
\put(6,4){\line(1,-2){1}}
\put(7,4){\line(-1,-2){1}}
\put(7,4){\line(0,-1){2}}
\put(7,4){\line(1,-2){1}}
\put(8,4){\line(-1,-1){2}}
\put(8,4){\line(-1,-2){1}}
\put(8,4){\line(0,-1){2}}
\put(6,0.7){Fig-2}
\end{picture}
\end{proof}

\section{Conclusion}
In this article, we have given a complete answer about the planarity of the graph $G_I(\Lambda)$ associated with the irreducible ideal of a numerical semigroup. However, for any integral ideal $I$, this is an open question.

{\bf Conflict of Interests:}
The authors hereby declare that there is no conflict of interests regarding the publication of this paper.

{\bf Data Availability Statement:}
No data is required for this study.

{\bf Funding Statement:}
This research is carried out as a part of the employment of the authors.

\end{document}